\renewcommand*\subjclass[2][2000]{%
  \def\@subjclass{#2}%
  \@ifundefined{subjclassname@#1}{%
    \ClassWarning{\@classname}{Unknown edition (#1) of Mathematics
      Subject Classification; using '1991'.}%
  }{%
    \@xp\let\@xp\subjclassname\csname subjclassname@#1\endcsname
  }%
}
\newtheorem{theorem}{Theorem}[section]
\newtheorem{lemma}[theorem]{Lemma}
\newtheorem{corollary}[theorem]{Corollary}
\newtheorem{proposition}[theorem]{Proposition}
\theoremstyle{definition}
\newtheorem{definition}[theorem]{Definition}
\newtheorem{remark}[theorem]{Remark}
\numberwithin{equation}{section}
\newcommand{\abs}[1]{\lvert#1\rvert}
\renewcommand*\subjclass[2][2000]{%
  \def\@subjclass{#2}%
  \@ifundefined{subjclassname@#1}{%
    \ClassWarning{\@classname}{Unknown edition (#1) of Mathematics
      Subject Classification; using '1991'.}%
  }{%
    \@xp\let\@xp\subjclassname\csname subjclassname@#1\endcsname
  }%
}
\newcommand{\R}{{\mathbb{R}}}
\renewcommand{\L}{{\rm L}}
\newcommand{\eps}{{\varepsilon}}
\def\NABLA#1{{\mathop{\nabla\kern-.5ex\lower1ex\hbox{$#1$}}}}
\def\Nabla#1{\nabla\kern-.5ex{}_{#1}}
\def\Tabla#1{\Tilde\nabla\kern-.5ex{}_{#1}}
\def\abs#1{\mathopen|#1\mathclose|}
\renewcommand{\Tilde}{\widetilde}
\begin{document}

\title[{Invertible harmonic mappings beyond Kneser and q.c. harmonic mappings}]{Invertible harmonic mappings beyond Kneser theorem and quasiconformal harmonic mappings}
\author{David Kalaj}
\address{University of Montenegro, faculty of natural sciences and mathematics,
Cetinjski put b.b. 81000, Podgorica, Montenegro}
\email{davidk@t-com.me} \subjclass {Primary 30C55, Secondary 31C05}
\keywords{Planar harmonic mappings, Quasiconformal mapping,
  Convex domains, Rado-Kneser-Choquet theorem}

\begin{abstract}
In this paper we extend Rado-Choquet-Kneser theorem for the mappings
with Lipschitz boundary data and essentially positive Jacobian at
the boundary without restriction on the convexity of image domain.
The proof is based on a recent extension of Rado-Choquet-Kneser
theorem by Alessandrini and Nesi \cite{ale} and it is used an
approximation schema. Some applications for the family of
quasiconformal harmonic mappings between Jordan domains are given.

\end{abstract}
\maketitle 
\section{Introduction and statement of the main result}
Harmonic mappings in the plane are univalent complex-valued harmonic
functions of a complex variable. Conformal mappings are a special
case where the real and imaginary parts are conjugate harmonic
functions, satisfying the Cauchy-Riemann equations. Harmonic
mappings were studied classically by differential geometers because
they provide isothermal (or conformal) coordinates for minimal
surfaces. More recently they have been actively investigated by
complex analysts as generalizations of univalent analytic functions,
or conformal mappings. For the background to this theory we refer to
the book of Duren \cite{dure}. If $w$ is a univalent complex-valued
harmonic functions, then by Lewy's theorem (see \cite{l}), $w$ has a
non-vanishing Jacobian and consequently, according to the inverse
mapping theorem, $w$ is a diffeomorphism.
Moreover, if $w$ is a harmonic mapping of the unit disk $\mathbf U$
onto a convex Jordan domain $\Omega$, mapping the boundary $\mathbf
T=\partial \mathbf U$ onto $\partial \Omega$ homeomorphically, then
$w$ is a diffeomorphism. This is celebrated theorem of Rado, Kneser
and Choquet (\cite{knes}). This theorem has been extended in various
directions (see for example \cite{jj}, \cite{an}, \cite{sy} and
\cite{sf}). One of the recent extensions is the following
proposition, due to Nesi and Alessandrini, which is one of the main
tools in proving our main result.


\begin{proposition}\label{ales}\cite{ale} Let $F: \mathbf T\to
\gamma\subset \mathbf C$ be an orientation preserving diffeomorphism
of class $C^1$ onto a simple closed curve of the complex plane
$\mathbf C$. Let $D$ be a bounded domain such that $\partial D
=\gamma$. Let $w=P[F]\in C^1(\overline{\mathbf U};\mathbf C),$ where
$P[f]$ is the Poisson extension of $F$. The mapping $w$ is a
diffeomorphism of $\overline{\mathbf U}$ onto $\overline{D}$ if and
only if
\begin{equation}\label{use} J_w(e^{it})
> 0\text{ everywhere on}\  \mathbf T,\end{equation} where $J_w(e^{it})
:=\lim_{r\to 1^-}J_w(re^{it})$, and $J_w(re^{it})$ is the Jacobian
of $w$ at $re^{it}$.
\end{proposition}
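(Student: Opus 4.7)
The forward implication is immediate from the inverse function theorem: a $C^1$ diffeomorphism of the compact set $\overline{\mathbf U}$ onto $\overline D$ has nowhere-vanishing Jacobian, so $J_w$ has constant sign on $\overline{\mathbf U}$, and because $F=w|_{\mathbf T}$ is an orientation-preserving diffeomorphism of $\mathbf T$ onto $\gamma$, that sign is positive; hence \eqref{use} holds.

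For the converse, I would decompose the harmonic map as $w=h+\overline g$ with $h,g$ holomorphic in $\mathbf U$ and $C^1$ up to $\mathbf T$, so that $J_w=|h'|^2-|g'|^2$ and hypothesis \eqref{use} reads $|g'(e^{it})|<|h'(e^{it})|$ for every $t$. The plan is then to carry out three steps in order: (i) upgrade this boundary inequality to $J_w>0$ throughout $\overline{\mathbf U}$; (ii) conclude via Lewy's theorem and the inverse function theorem that $w$ is a local $C^1$ diffeomorphism on $\overline{\mathbf U}$; (iii) use topological degree to promote local bijectivity to a global homeomorphism of $\overline{\mathbf U}$ onto $\overline D$.

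For step (i) the natural device is the analytic dilatation $a=g'/h'$, which is meromorphic in $\mathbf U$, continuous up to $\mathbf T$ (since by \eqref{use} $h'$ cannot vanish on $\mathbf T$), and satisfies $|a|<1$ on $\mathbf T$. If $h'$ were nonvanishing on $\overline{\mathbf U}$, the maximum modulus principle would immediately yield $|a|<1$ on $\mathbf U$, hence $J_w>0$ everywhere; the real work is to exclude interior zeros of $h'$, at which one would have $J_w=-|g'|^2\le 0$. I would attempt this through a Rouch\'e-type perturbation of $h'-sg'$, $s\in[0,1]$, combined with the $C^1$ regularity of $w$ up to $\mathbf T$, so that any interior zero of $h'$ ultimately forces either a violation of \eqref{use} or a contradiction with the fact that $F$ parameterizes $\gamma$ with winding number one. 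Step (iii) is then straightforward: since $F$ has winding number $+1$ about every $p\in D$ and $J_w>0$ everywhere, each preimage of $p$ under $w$ contributes $+1$ to $\deg(w,\mathbf U,p)$, so $p$ has exactly one preimage; together with (ii) and the boundary behaviour this produces the desired global $C^1$ diffeomorphism.

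The main obstacle is step (i). Without convexity of $D$ one loses the classical supporting-line barriers of Kneser--Choquet, so the entire burden of ruling out interior critical points of $w$ rests on the strict boundary inequality \eqref{use} together with the $C^1$ smoothness of $F$; handling potential zeros of $h'$ without producing a region in $\mathbf U$ where the Jacobian fails to be positive is where I expect the bulk of the technical difficulty to lie. Once this hurdle is overcome, steps (ii) and (iii) follow by standard arguments.
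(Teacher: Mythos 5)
First, a point of comparison: the paper does not prove Proposition~\ref{ales} at all --- it is imported verbatim from Alessandrini and Nesi \cite{ale} and used as a black box, so there is no in-paper argument to measure your proposal against. Judged on its own terms, your proposal correctly handles the necessity direction, correctly reduces \eqref{use} to $|g'(e^{it})|<|h'(e^{it})|$ via $w=h+\overline g$ and $J_w=|h'|^2-|g'|^2$, and correctly identifies that everything hinges on excluding interior zeros of $h'$; step (iii) is also fine. But step (i) --- the entire content of the theorem --- is left open, and the device you propose for it does not close the gap. A Rouch\'e comparison of $h'$ with $h'-sg'$ only shows that these functions have the \emph{same} number of zeros in $\mathbf U$ for every $s\in[0,1]$ (since $|sg'|<|h'|$ on $\mathbf T$); it gives no information about what that common number is, which is the whole question. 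Likewise, the winding number of $F$ about points of $D$ (which you invoke) is not the relevant invariant here.

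The missing idea is to count the zeros of $h'$ by the argument principle applied to the \emph{tangent} vector of the boundary curve. Since $w\in C^1(\overline{\mathbf U})$, both $h'$ and $g'$ extend continuously to $\mathbf T$, and there
$F'(t)=w_\tau(e^{it})=ie^{it}h'(e^{it})\bigl(1-e^{-2it}\,\overline{g'(e^{it})}/h'(e^{it})\bigr)$.
By \eqref{use} the last factor takes values in the disk $\{\zeta:|\zeta-1|<1\}$, which lies in the right half-plane, so its total argument increment around $\mathbf T$ is zero. Because $F$ is an orientation-preserving $C^1$ diffeomorphism onto a simple closed curve, Hopf's Umlaufsatz gives $\Delta\arg F'=2\pi$; subtracting the contribution $2\pi$ of the factor $ie^{it}$ leaves $\Delta\arg\bigl(h'|_{\mathbf T}\bigr)=0$, and since $|h'|>|g'|\ge 0$ on $\mathbf T$ the function $h'$ does not vanish there, so by the argument principle $h'$ has no zeros in $\mathbf U$. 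Then $a=g'/h'$ is holomorphic on $\mathbf U$, continuous on $\overline{\mathbf U}$ with $|a|<1$ on $\mathbf T$, hence $|a|<1$ on $\mathbf U$ by the maximum principle, giving $J_w>0$ on all of $\overline{\mathbf U}$; your steps (ii)--(iii) then finish the proof. Without this (or an equivalent) computation, the proposal remains a plan rather than a proof, precisely at the point you yourself flag as the main obstacle.
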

%
%

In this paper we generalize Rado-Kneser-Choquet theorem as follows.

\begin{theorem}[The main result]\label{ma} Let $F: \mathbf T\to
\gamma\subset \mathbf C$ be an orientation preserving Lipschitz weak
homeomorphism of the unit circle $\mathbf T$ onto a $C^{1,\alpha}$
smooth Jordan curve. Let $D$ be a bounded domain such that $\partial
D =\gamma$. Then $J_w(e^{it})/|F'(t)|$ exists a.e. in $\mathbf T$
and has a continuous extension $T_w(e^{it})$ to $\mathbf T$. If
\begin{equation}\label{use12}
T_w(e^{it})>0\ \ \text{  everywhere on}\ \ \mathbf T,\end{equation}
then the mapping $w=P[F]$ is a diffeomorphism of $\mathbf U$ onto
$D$.
\end{theorem}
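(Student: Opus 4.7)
The plan is to combine Proposition~\ref{ales} with a smooth approximation of the boundary datum $F$ and a passage to the limit. I begin by establishing the existence and the explicit formula for $T_w$. Using polar coordinates one has
\begin{equation*}
J_w(r,\theta) = \frac{1}{r}\IM\bigl(w_\theta(re^{i\theta})\,\overline{w_r(re^{i\theta})}\bigr),
\end{equation*}
so at the boundary $J_w(e^{it}) = \IM\bigl(F'(t)\,\overline{N_w(t)}\bigr)$, where $N_w(t) := \lim_{r\to 1^-}\partial_r w(re^{it})$. Factoring $F'(t) = |F'(t)|\,\tau(t)$ with $\tau(t)$ the unit tangent to $\gamma$ at $F(t)$ yields $J_w(e^{it})/|F'(t)| = \IM\bigl(\tau(t)\,\overline{N_w(t)}\bigr)$. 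Because $\gamma\in C^{1,\alpha}$ and $F$ is continuous, $\tau$ is a continuous function of $t$ even at points where $F'$ vanishes; and classical boundary regularity for Poisson integrals with Lipschitz data mapping onto a $C^{1,\alpha}$ target produces a continuous extension of $N_w$ to $\overline{\mathbf{U}}$. The right-hand side thus defines the required continuous extension $T_w(e^{it})$.

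For the approximation, let $g\colon\mathbf{T}\to\gamma$ be an arclength parametrization (so $g\in C^{1,\alpha}$), and write $F = g\circ\varphi$ with $\varphi\colon\mathbf{T}\to\mathbf{T}$ weakly monotone and Lipschitz. Approximate $\varphi$ by a sequence of $C^\infty$ strictly increasing diffeomorphisms $\varphi_n\to\varphi$ uniformly, set $F_n := g\circ\varphi_n$, and $w_n := P[F_n]$. Each $F_n$ is an orientation preserving $C^{1,\alpha}$ diffeomorphism of $\mathbf{T}$ onto $\gamma$ with $F_n'$ nowhere zero. The hypothesis $T_w > 0$ on the compact circle $\mathbf{T}$ gives $T_w \ge c > 0$. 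Provided the approximants are chosen so that $N_{w_n}\to N_w$ uniformly on $\mathbf{T}$, one has $T_{w_n}\to T_w$ uniformly, hence for all large $n$ the identity $J_{w_n}(e^{it}) = |F_n'(t)|\,T_{w_n}(e^{it})$ is strictly positive everywhere on $\mathbf{T}$. Proposition~\ref{ales} then furnishes, for each such $n$, a diffeomorphism $w_n\colon\overline{\mathbf{U}}\to\overline{D}$. Since $F_n\to F$ uniformly, the maximum principle yields $w_n\to w = P[F]$ uniformly on $\overline{\mathbf{U}}$. A Hurwitz-type argument applied to the analytic factors in the canonical decomposition $w_n = h_n + \overline{g_n}$ (for which $|g_n'/h_n'|<1$) shows that in the limit $h'$ has no interior zeros and $|g'/h'|<1$ in $\mathbf{U}$, so $J_w>0$ throughout $\mathbf{U}$; combined with the fact that $F$ has winding number $+1$ around each point of $D$, a degree argument forces $w$ to be a diffeomorphism of $\mathbf{U}$ onto $D$.

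The chief obstacle is ensuring the uniform convergence $N_{w_n}\to N_w$ on $\mathbf{T}$. This is a quantitative boundary-regularity estimate for the Hilbert-transform-like operator $F\mapsto N_{P[F]}$, and it is precisely where the $C^{1,\alpha}$ smoothness of $\gamma$ is indispensable: it permits a local straightening of the target curve and Schauder-type estimates, so that small uniform perturbations of $\varphi$ produce small uniform perturbations of $N_w$. Once this uniform convergence is in place, the approximation scheme closes and the conclusion follows readily from Proposition~\ref{ales} together with Lewy's theorem.
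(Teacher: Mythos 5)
Your overall architecture (mollify the boundary parametrization, apply Proposition~\ref{ales} to the approximants, pass to the limit via a Hurwitz/Hengartner--Schober argument) is the same as the paper's. But there is a genuine gap at the heart of your argument: you claim that $N_w(t)=\lim_{r\to1^-}\partial_r w(re^{it})$ admits a continuous extension to $\mathbf T$ when $F$ is merely Lipschitz and $\gamma\in C^{1,\alpha}$. This is false in general. At the boundary $rw_r$ is the harmonic conjugate of $w_\tau$, so $N_w=H(F')$ is the Hilbert transform of a bounded function, which need not be continuous (or even bounded pointwise in a controlled way); the paper explicitly warns, citing Martio's counterexample, that the partial derivatives of $w$ need \emph{not} extend continuously to $\mathbf T$ even though the Jacobian does. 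The same defect kills your approximation step: if $N_{w_n}\to N_w$ uniformly with each $N_{w_n}$ continuous, then $N_w$ would be continuous, so the convergence you flag as ``the chief obstacle'' cannot in fact be established — no Schauder or straightening argument will produce it. What is true, and what must be proved, is the subtler statement that the \emph{combination} $T_w=\IM\bigl(\tau(t)\overline{N_w(t)}\bigr)$ is continuous and stable under mollification, which rests on a cancellation invisible at the level of $N_w$ alone.

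The paper's route around this is the content you are missing. One first shows (Lemma~\ref{form}, via dominated convergence with a Dini-type dominant exploiting $|K(s,t)|\le\int_0^{|s-t|}\omega$) that
\begin{equation*}
J_w(e^{i\tau})=f'(\tau)\,T[f](\tau),\qquad
T[f](\tau)=\int_0^{2\pi}\frac{\RE\bigl[\overline{(g(f(t))-g(f(\tau)))}\, i\, g'(f(\tau))\bigr]}{2\sin^2\frac{t-\tau}{2}}\,dt ,
\end{equation*}
the point being that the numerator vanishes to order higher than one on the diagonal, so the singular kernel is integrable. One then integrates by parts to rewrite $T[f](\tau)=\int_{-\pi}^{\pi}f'(t+\tau)\sin\bigl[\beta(f(t+\tau))-\beta(f(\tau))\bigr]\cot\frac t2\,\frac{dt}{2\pi}$ and proves that the family $\{T[f_n]\}$ (with $f_n$ the mollified parametrizations) is uniformly bounded and equicontinuous, using H\"older's inequality, Young's inequality for convolutions, and the $L^p$-continuity of translations; this is precisely where $\gamma\in C^{1,\alpha}$ is used, to choose exponents $q,\delta$ with $(\alpha-\alpha\delta-1)q>-1$ making the $\cot\frac t2$ integrals converge. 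Arzela--Ascoli then gives $T[f_n]\to T[f]$ uniformly, hence $J_{w_n}=f_n'T[f_n]>0$ on $\mathbf T$ for large $n$, and Proposition~\ref{ales} applies. Your limit argument at the end is fine, but without the uniform convergence of the normalized Jacobians (as opposed to the radial derivatives) the scheme does not close.
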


 In order to compare this statement with Kneser's Theorem, it is
worth noticing that when $D$ is convex, then by Remark~\ref{rere}
the condition \eqref{use12} is automatically satisfied.

It follows from Theorem~\ref{ma} that under its conditions, the
Jacobian $J_w$ of $w$ has continuous extension to the boundary
provided that $F\in C^1(\mathbf T)$ and it should be noticed that
this \underline{does not} mean that the partial derivatives of $w$
have necessarily a continuous extension to the boundary (see e.g.
\cite{om} for a counterexample).

Note that we do not have any restriction on convexity of image
domain in Theorem~\ref{ma} which is  proved in section~3.

Using this theorem, in section~4 we characterize all quasiconformal
harmonic mappings between the unit disk $\mathbf U$ and a smooth
Jordan domain $D$, in terms of boundary data (see
Theorem~\ref{convex1}) which could be considered as a variation of
Proposition~\ref{ales}. 
%
%
\section{Preliminaries}
\subsection{Arc length parameterization of a Jordan curve} Suppose
that $\gamma$ is a rectifiable Jordan curve in the complex plane
$\mathbf C$. Denote by $l$ the length of $\gamma$ and let
$g:[0,l]\mapsto \gamma$ be an arc length parameterization of
$\gamma$, i.e. a parameterization satisfying the condition:
$$| g'(s)|=1 \text{ for all $s\in [0,l]$}.$$
{We will say that  $\gamma$ is of class $C^{1,\alpha}$, $0<\alpha\le
1$, if $g$  is of class   $C^1$    and
$$\sup_{t,s}\frac{|g'(t)-g'(s)|}{|t-s|^{\alpha}}<\infty.$$

\begin{definition} Let $l=|\gamma|$.
We will say that a surjective function $F=g\circ f:\mathbf T\to
\gamma$ is a \emph{weak homeomorphism}, if $f:[0,2\pi]\to[0,l]$ is a
nondecreasing surjective function.
\end{definition}

\begin{definition}
Let $f:[a,b]\to \mathbf C$ be a continuous function. The modulus of
continuity of $f$ is
   $$\omega(t)= \omega_f(t) = \sup_{|x-y|\le t} |f(x)-f(y)|. \,$$
The function $f$ is called Dini continuous if
\begin{equation}\label{didi}\int_{0^+} \frac{\omega_f(t)}{t}\,dt < \infty.\end{equation} Here
$\int_{0^+}:=\int_{0}^k$ for some positive constant $k$.  A smooth
Jordan curve $\gamma$ with the length $l=|\gamma|$, is said to be
Dini smooth if $g'$ is Dini continuous. Observe that every smooth
$C^{1,\alpha}$ Jordan curve is Dini smooth.
\end{definition}
Let
\begin{equation}\label{ker}K(s,t)=\text{Re}\,[\overline{(g(t)-g(s))}\cdot i
g'(s)]\end{equation} be a function defined on $[0,l]\times[0,l]$. By
$K(s\pm l, t\pm l)=K(s,t)$ we extend it on $\mathbf R\times \mathbf
R$. Note that $ig'(s)$ is the inner unit normal vector of $\gamma$
at $g(s)$ and therefore, if $\gamma$ is convex then
\begin{equation}\label{convexkernel}K(s,t)\ge 0\text{ for every $s$
and $t$}.\end{equation} Suppose now that $F:\mathbf R\mapsto \gamma$
is an arbitrary $2\pi$ periodic Lipschitz function such that
$F|_{[0,2\pi)}:[0,2\pi)\mapsto \gamma$ is an orientation preserving
bijective function. { Then there exists an increasing continuous
function $f:[0,2\pi]\mapsto [0,l]$ such that
\begin{equation}\label{fgs}F(\tau)=g(f(\tau)).\end{equation}}
In the remainder of this paper we will identify $[0,2\pi)$ with the
unit circle $\mathbf T$,  and $F(s)$ with $ F(e^{is})$. In view of
the previous convention we have for a.e. $e^{i\tau}\in \mathbf T$
that
$$F'(\tau)=g'(f(\tau))\cdot f'(\tau),$$ and therefore
$$|F'(\tau)|=|g'(f(\tau))|\cdot |f'(\tau)|=f'(\tau).$$
Along with the function $K$ we will also consider the function $K_F$
defined by
$$K_F(t,\tau)=\text{Re}\,[\overline{(F(t)-F(\tau))}\cdot
iF'(\tau)].$$ It is easy to see that
\begin{equation}\label{kg}K_F(t,\tau)=f'(\tau)K(f(t), f(\tau)).
\end{equation}

\begin{lemma}
If $\gamma$ is Dini smooth, and $\omega$ is modulus of continuity of
$g'$, then
\begin{equation}\label{oh}|K(s,t)|\le
\int_0^{\min\{|s-t|,l-|s-t|\}}\omega(\tau)d\tau.\end{equation}
\end{lemma}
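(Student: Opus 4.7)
The plan is to exploit the arc-length identity $|g'|^{2}=1$ to express the kernel $K(s,t)$ as an integral involving the difference $g'(s)-g'(\xi)$, which is what the modulus of continuity $\omega$ controls. Assume without loss of generality that $s\le t$ (the case $s>t$ is symmetric). By the fundamental theorem of calculus, $\overline{g(t)-g(s)}=\int_{s}^{t}\overline{g'(\xi)}\,d\xi$. Substituting into the definition \eqref{ker} of $K$ and using that $\Re(iz)=-\Im z$,
\[
K(s,t)=\Re\!\int_{s}^{t} i\,\overline{g'(\xi)}\,g'(s)\,d\xi
=-\int_{s}^{t}\Im\bigl[\overline{g'(\xi)}\,g'(s)\bigr]\,d\xi.
\]

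The key algebraic observation is that $\overline{g'(\xi)}g'(\xi)=|g'(\xi)|^{2}=1$ is real, hence
\[
\overline{g'(\xi)}\,g'(s)=1+\overline{g'(\xi)}\bigl(g'(s)-g'(\xi)\bigr),
\qquad \Im\bigl[\overline{g'(\xi)}\,g'(s)\bigr]=\Im\bigl[\overline{g'(\xi)}\bigl(g'(s)-g'(\xi)\bigr)\bigr].
\]
Using $|g'(\xi)|=1$ and the definition of $\omega$,
\[
|K(s,t)|\le \int_{s}^{t}|g'(s)-g'(\xi)|\,d\xi
\le \int_{s}^{t}\omega(\xi-s)\,d\xi=\int_{0}^{t-s}\omega(\tau)\,d\tau.
\]

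To obtain the min in the stated bound I would use the periodicity of $g$. Since $\gamma$ is a closed curve of length $l$, extending $g$ to be $l$-periodic gives the alternate representation $g(t)-g(s)=-\int_{t}^{s+l}g'(\xi)\,d\xi$, and the same computation (together with the substitution $\eta=\xi-l$, noting $|\eta-s|\le l-(t-s)$) yields
\[
|K(s,t)|\le \int_{t}^{s+l}|g'(s)-g'(\xi)|\,d\xi\le \int_{0}^{l-(t-s)}\omega(\tau)\,d\tau.
\]
Taking the smaller of the two bounds yields $|K(s,t)|\le \int_{0}^{\min\{|s-t|,\,l-|s-t|\}}\omega(\tau)\,d\tau$, which is \eqref{oh}.

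There is no real obstacle here; the only point requiring care is the identity $\overline{g'(\xi)}g'(s)=1+\overline{g'(\xi)}(g'(s)-g'(\xi))$, which is the mechanism by which the a priori size $|g'|=1$ gets converted into the oscillation $|g'(s)-g'(\xi)|$. The periodic-detour argument for the second bound is straightforward provided one is careful to interpret $\omega$ consistently with the periodic distance on the parameter interval.
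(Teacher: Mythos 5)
Your proof is correct, and it reaches the same bound by a genuinely different decomposition. The paper subtracts the \emph{chord} direction: it rewrites $K(s,t)=\mathrm{Re}\,[\overline{(g(t)-g(s))}\cdot i(g'(s)-\tfrac{g(t)-g(s)}{t-s})]$, which is legitimate because $\mathrm{Re}\,[\overline{z}\cdot iz]=0$, and then bounds the two factors separately, using $|g(t)-g(s)|\le t-s$ together with $|g'(s)-\tfrac{g(t)-g(s)}{t-s}|\le\tfrac{1}{t-s}\int_0^{t-s}\omega(\tau)\,d\tau$. You instead represent the chord as $\int_s^t g'(\xi)\,d\xi$ and subtract the \emph{local tangent} $g'(\xi)$ under the integral sign, using $\mathrm{Im}\,[\overline{g'(\xi)}g'(\xi)]=\mathrm{Im}\,1=0$; this lands on $|K(s,t)|\le\int_s^t|g'(s)-g'(\xi)|\,d\xi$ in one step and is marginally more economical, since no separate length estimate for the chord is needed. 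Both arguments hinge on the same null identity (the tangential component contributes nothing, so only the oscillation of $g'$ survives), and both obtain the wrap-around bound $\int_0^{l-|s-t|}\omega$ from $l$-periodicity: the paper does this by invoking $K(s\pm l,t\pm l)=K(s,t)$ and reapplying the one-sided estimate, which is the same device as your detour through the arc $[t,s+l]$. The caveat you flag --- that $\omega$ must be read against the periodic distance on the parameter interval --- is present, and equally implicit, in the paper's version, so it is not a defect of your argument relative to the original.
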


\begin{proof}
Note that
\[\begin{split}K(s,t)&=\text{Re}[\overline{(g(t)-g(s))}\cdot i g'(s)]\\&=
\text{Re}\left[\overline{(g(t)-g(s))}\cdot i
\left(g'(s)-\dfrac{g(t)-g(s)}{t-s}\right)\right],\end{split}\] and
$$g'(s)-\dfrac{g(t)-g(s)}{t-s}=\int_s^t\frac{g'(s)-g'(\tau)}{t-s}d\tau.$$
Therefore
\[\begin{split}\left|g'(s)-\dfrac{g(t)-g(s)}{t-s}\right|&\le\int_s^t
\frac{|g'(s)-g'(\tau)|}{t-s}d\tau\\ &\le \int_s^t
\frac{\omega(\tau-s)}{t-s}d\tau
\\&=\frac{1}{t-s}{\int_0^{t-s}\omega(\tau)d\tau}.
\end{split}\] On the other hand $$|\overline{g(t)-g(s)}|\le
\sup_{s\le x\le t}|g'(x)| (t-s)=(t-s).$$ It follows that

\begin{equation}\label{ohoh}|K(s,t)|\le
\int_0^{|s-t|}\omega(\tau)d\tau.\end{equation} Since $K(s\pm l, t\pm
l) = K(s,t)$ according to \eqref{ohoh} we obtain \eqref{oh}.
\end{proof}

\begin{lemma}
If $\omega:[0,l]\to [0,\infty)$, $\omega(0)=0$, is a bounded
function satisfying $\int_{0^+} \omega(x) dx/ x<\infty$, then for
every constant $a$, we have $\int_{0^+} \omega(a x) dx/ x<\infty$.
Next for every $0<y\le l$ holds the following formula:
\begin{equation}\label{goodd}\int_{0+}^y\frac{1}{x^2}\int_0^x\omega(a t) dt dx =
\int_{0+}^y\frac{\omega(ax)}{x}-\frac{\omega(ax)}{y}dx.\end{equation}
\end{lemma}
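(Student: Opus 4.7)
The plan is to treat the two assertions separately: the finiteness claim reduces to a direct change of variables, while the formula \eqref{goodd} will follow from Tonelli's theorem applied on a triangle.

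For the finiteness claim, I would substitute $u=ax$ (implicitly assuming $a>0$; the case $a=0$ is trivial since $\omega(0)=0$, while $a<0$ falls outside the domain of $\omega$). Under this substitution $dx/x=du/u$, so the change of variables immediately identifies $\int_{0^+}\omega(ax)\,dx/x$ with $\int_{0^+}\omega(u)\,du/u$, which is finite by hypothesis.

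For the identity, I would observe that on the triangle $\{(x,t):0<t<x\le y\}$ the integrand $\omega(at)/x^{2}$ is nonnegative, so Tonelli's theorem licenses the exchange of the order of integration without requiring the outer integral to be known finite a priori. Performing the exchange yields
\begin{equation*}
\int_{0^+}^{y}\frac{1}{x^{2}}\int_{0}^{x}\omega(at)\,dt\,dx
=\int_{0}^{y}\omega(at)\int_{t}^{y}\frac{dx}{x^{2}}\,dt
=\int_{0}^{y}\omega(at)\left(\frac{1}{t}-\frac{1}{y}\right)dt,
\end{equation*}
and renaming $t$ as $x$ on the right recovers \eqref{goodd}. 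Both pieces on the right-hand side will be finite: the first by the finiteness claim just proved, the second because $\omega$ is bounded.

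I do not anticipate any serious obstacle here; the only point worth flagging is that Tonelli does not require an a priori integrability check because $\omega\ge 0$, so the Fubini-style exchange is legitimate before we know that the iterated integrals converge. An alternative route via integration by parts with $U(x)=\int_{0}^{x}\omega(at)\,dt$ and $dV=dx/x^{2}$ would work equally well, provided one kills the boundary term at $0$ via the elementary estimate $\tfrac{1}{x}\int_{0}^{x}\omega(at)\,dt\le \int_{0}^{x}\omega(at)/t\,dt$, whose right-hand side tends to $0$ as $x\to 0^{+}$ by the first half of the lemma.
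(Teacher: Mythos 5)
Your proof is correct, and it takes a genuinely different route from the paper's. The paper proves \eqref{goodd} by integration by parts, taking $u=\int_0^x\omega(at)\,dt$ and $dv=x^{-2}dx$, and then must show that the boundary term at $0$ vanishes; it does so by asserting $\lim_{\alpha\to 0^+}\alpha^{-1}\int_0^\alpha\omega(at)\,dt=\omega(0)=0$, a step which as written leans on continuity of $\omega$ at $0$ (not among the hypotheses) rather than on the stated Dini condition. Your Tonelli argument on the triangle $\{0<t<x\le y\}$ sidesteps boundary terms altogether, requires nothing of $\omega$ beyond nonnegativity and measurability, and delivers the identity together with the simultaneous finiteness of both sides in one stroke; the only care needed is the one you flag, namely that $\int_{0^+}^y$ denotes a limit of truncated integrals, which monotone convergence reconciles with the full integral over $(0,y]$. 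Notably, the estimate you mention in your closing aside, $\frac{1}{x}\int_0^x\omega(at)\,dt\le\int_0^x\omega(at)\,t^{-1}dt\to 0$, is precisely what is needed to make the paper's own boundary-term computation rigorous under the stated hypotheses, so your alternative route also repairs the one loose step in the published argument. Your handling of the first claim (substitution $u=ax$ for $a>0$, trivial for $a=0$) agrees with what the paper dismisses as immediate; just note that one must shrink the upper limit so that $ax$ stays in $[0,l]$ where $\omega$ is defined, which is harmless since $\int_{0^+}$ only concerns behaviour near $0$.
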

\begin{proof} The first statement of the lemma is immediate.
Taking the substitutions $u = \int_0^x\omega(a t) dt $ and $dv =
x^{-2} dx$, and using the fact that $$\lim_{\alpha \to
0}\frac{\int_0^{\alpha}\omega(a t) dt}{\alpha} = \omega(0) = 0$$ we
obtain:
\[\begin{split}\int_{0+}^y\frac{1}{x^2}\int_0^x\omega(a t) dt dx
&=\lim_{\alpha\to 0+}\int_{\alpha}^y\frac{1}{x^2}\int_0^x\omega(a t)
dt dx\\& =-\lim_{\alpha\to 0+}\left.\frac{\int_0^x\omega(a t)
dt}{x}\right|_{\alpha}^{y} +\lim_{\alpha\to
0+}\int_{\alpha}^{y}\frac{\omega(a
x)}{x}dx\\&=\int_{0+}^y\frac{\omega(ax)}{x}-\frac{\omega(ax)}{y}dx.\end{split}\]

\end{proof}
A function $\varphi:A\to B$ is called  $(\ell,\mathcal{L)}$
bi-Lipschitz, where $0<\ell<\mathcal{L}<\infty$, if $\ell|x-y|\le
|\varphi(x)-\varphi(y)|\le\mathcal{L} |x-y|$ for $x,y\in A$.
\begin{lemma}\label{dri}
If $\varphi:\mathbf R\to \mathbf R$ is a $(\ell,\mathcal{L})$
bi-Lipschitz mapping ($\mathcal{L}$ Lipschitz weak homeomorphism),
such that $\varphi(x+a) = \varphi(x) + b$ for some $a$ and $b$ and
every $x$, then there exist a sequence of  $(\ell,\mathcal{L})$
bi-Lipschitz diffeomorphisms (respectively a sequence of
diffeomorphisms) $\varphi_n:\mathbf R\to \mathbf R$ such that
$\varphi_n$ converges uniformly to $\varphi$, and $\varphi_n(x+a) =
\varphi_n(x)+b$.
\end{lemma}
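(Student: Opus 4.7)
\emph{Proof plan.} The strategy is standard mollification, with a small preliminary perturbation in the weak-homeomorphism case to force the derivative to be strictly positive. Fix a smooth nonnegative mollifier $\rho\in C^{\infty}_c(\R)$ with $\int_{\R}\rho=1$ and $\supp\rho\subset[-1,1]$, and write $\rho_n(x):=n\rho(nx)$.

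In the bi-Lipschitz case one sets $\varphi_n:=\varphi*\rho_n$. Smoothness is immediate; the periodicity $\varphi_n(x+a)=\varphi_n(x)+b$ is a one-line change of variables using $\varphi(x+a)=\varphi(x)+b$; and $|\varphi_n(x)-\varphi(x)|\le\mathcal{L}\int|y|\rho_n(y)\,dy\le\mathcal{L}/n$, so $\varphi_n\to\varphi$ uniformly. Since $\varphi$ is bi-Lipschitz it is monotone and a.e.\ differentiable with $\ell\le\varphi'\le\mathcal{L}$, and this bound passes to the convolution: $\varphi_n'(x)=\int\varphi'(x-y)\rho_n(y)\,dy\in[\ell,\mathcal{L}]$. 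In particular $\varphi_n'\ge\ell>0$, so each $\varphi_n$ is an $(\ell,\mathcal{L})$ bi-Lipschitz diffeomorphism.

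The case of an $\mathcal{L}$-Lipschitz weak homeomorphism is the main obstacle, since $\varphi'$ may vanish on a set of positive measure and a plain convolution can retain critical points. The plan is to perturb first by the linear map $\psi(x):=(b/a)x$, which satisfies the same periodicity. Because $\varphi$ is nondecreasing and $\mathcal{L}$-Lipschitz, $b=\varphi(a)-\varphi(0)=\int_0^a\varphi'(t)\,dt$ forces $0\le b/a\le\mathcal{L}$; the degenerate case $b=0$ would make $\varphi$ constant modulo period and contradicts the weak-homeomorphism hypothesis, so $b/a>0$. For a sequence $\lambda_n\downarrow 0$ set $\tilde\varphi_n:=\lambda_n\psi+(1-\lambda_n)\varphi$. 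As a convex combination of $\mathcal{L}$-Lipschitz maps with the same periodicity, $\tilde\varphi_n$ is $\mathcal{L}$-Lipschitz, nondecreasing, and satisfies $\tilde\varphi_n(x+a)=\tilde\varphi_n(x)+b$, with $\tilde\varphi_n'\ge\lambda_n(b/a)>0$ almost everywhere. Moreover $\tilde\varphi_n-\varphi=\lambda_n(\psi-\varphi)$, and $\psi-\varphi$ is $a$-periodic hence uniformly bounded on $\R$, so $\tilde\varphi_n\to\varphi$ uniformly.

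Finally one sets $\varphi_n:=\tilde\varphi_n*\rho_n$. The triangle inequality combined with $\|\tilde\varphi_n*\rho_n-\tilde\varphi_n\|_\infty\le\mathcal{L}/n$ yields $\varphi_n\to\varphi$ uniformly; the periodicity and the $\mathcal{L}$-Lipschitz property transfer through the convolution exactly as in the bi-Lipschitz case; and $\varphi_n'\ge\lambda_n(b/a)>0$ makes each $\varphi_n$ a strictly increasing smooth diffeomorphism of $\R$. The only delicate point of the argument is this last strictly-positive derivative bound, but once the linear perturbation $\psi$ has been introduced it is a direct consequence of the convolution identity, which is why the whole difficulty of the lemma is concentrated in the single trick of interpolating with $(b/a)x$ before mollifying.
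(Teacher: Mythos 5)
Your proposal is correct and follows essentially the same route as the paper: mollify directly in the bi-Lipschitz case, and in the weak-homeomorphism case first interpolate with the linear map before mollifying. Indeed, your convex combination $\lambda_n(b/a)x+(1-\lambda_n)\varphi$ with $\lambda_n=a/(nb+a)$ is algebraically identical to the paper's perturbation $\frac{nb}{nb+a}\bigl(\varphi(x)+x/n\bigr)$, and your direct estimate $\|\varphi*\rho_n-\varphi\|_\infty\le\mathcal{L}/n$ is only a cosmetic improvement over the paper's appeal to Arzela--Ascoli.
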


\begin{proof}

We introduce appropriate mollifiers: fix a smooth function
$\rho:\R\to[0,1]$ which is compactly supported on the interval
$(-1,1)$ and satisfies $\int_\R\rho=1$. For $\eps=1/n$ consider the
mollifier
\begin{equation}\label{mol}
     \rho_\eps(t):=\frac{1}{\eps}\,
     \rho\left(\frac{t}{\eps}\right).
\end{equation}
It is compactly supported in the interval $(-\eps,\eps)$ and
satisfies $\int_\R\rho_\eps=1$. Define
$$\varphi_\varepsilon(x)=\varphi\ast \rho_\eps = \int_{\mathbf R} \varphi(y)
\frac{1}{\varepsilon}\rho\left(\frac{x-y}{\varepsilon}\right)dy=\int_{\mathbf
R} \varphi(x-\varepsilon z)\rho(z)dz.$$ Then
$$\varphi'_\varepsilon(x) =\int_{\mathbf R} \varphi'(x-\varepsilon
z)\rho(z)dz.$$ It follows that
$$\ell\int_{\mathbf R} \rho(z)dz=\ell\le |\varphi'_\varepsilon(x)|\le \mathcal{L}\int_{\mathbf R} \rho(z)dz=\mathcal{L}.$$
The fact that $\varphi_\varepsilon(x)$ converges uniformly to
$\varphi$ follows by Arzela-Ascoli theorem.

To prove the case, when $\varphi$ is a $\mathcal{L}$ Lipschitz weak
homeomorphism, we make use of the following simple fact. Since
$\varphi$ is $\mathcal{L}$ -Lipschitz, then
$$\varphi_m(x)=\frac{mb}{mb+a}(\varphi(x)+x/m)$$ is
$(\ell_m,\mathcal{L}_m)$ bi-Lipschitz, for some
$\ell_m,\mathcal{L}_m$, with $\varphi_m(x+a)=\varphi_m(x)+b$, and
$\varphi_m$ converges uniformly to $\varphi$. By the previous case,
we can choose a diffeomorphism
\begin{equation}\label{aar}\psi_{m}=\varphi_m\ast \rho_{\varepsilon_m}=\frac{m
b}{m
b+a}\left(\varphi\ast\rho_{\varepsilon_m}+\frac{x}{m}\right),\end{equation}
such that $\|\psi_m-\varphi_m\|_\infty\le 1/m$. Thus
$$\lim_{n\to\infty}\|\psi_n-\varphi\|_\infty=0.$$
The proof is completed.
\end{proof}

\subsection{Harmonic functions and Poisson integral}
The function
$$P(r,t)=\frac{1-r^2}{2\pi (1-2r\cos t+r^2)},\ \ \ 0\le r<1,\ \ t\in[0,2\pi]$$ is called the Poisson
kernel. The Poisson integral of a complex function $F\in L^1(\mathbf
T)$ is a complex harmonic function given by
\begin{equation}\label{e:POISSON}
w(z)=u(z)+iv(z)=P[F](z)=\int_0^{2\pi}P(r,t-\tau)F(e^{it})dt,
\end{equation}
where $z=re^{i\tau}\in \mathbf U$. We refer to the book of Axler,
Bourdon and Ramey \cite{ABR} for good setting of harmonic functions.

The Hilbert transformation of a function $\chi\in \L^1(\mathbf T)$
is defined by the formula
$$\tilde\chi(\tau)=H(\chi)(\tau)=-\frac 1\pi
\int_{0+}^\pi\frac{\chi(\tau+t)-\chi(\tau-t)}{2\tan(t/2)}\mathrm
dt.$$ This integral is improper and converges for a.e.
$\tau\in[0,2\pi]$; this and other facts concerning the operator $H$
used in this paper can be found in the book of Zygmund
\cite[Chapter~VII]{ZY}. If $f$ is a harmonic function then a
harmonic function $\tilde f$ is called the harmonic conjugate of $f$
if $f+i\tilde f$ is an analytic function. Let $\chi, \tilde\chi\in
L^1(\mathbf T)$. Then
\begin{equation}\label{lit}P[\tilde \chi]=\widetilde
{P[\chi]},\end{equation} where $\tilde k(z)$ is the harmonic
conjugate of $k(z)$ (see e.g. \cite[Theorem~6.1.3]{lib}).

Assume that $z=x+iy=r e^{i\tau}\in\mathbf U$. The complex
derivatives of a differential mapping $w:\mathbf U\to \mathbf C$ are
defined as follows:
$$w_z=\frac{1}{2}\left(w_x+\frac{1}{i} w_y\right)$$ and $$w_{\bar z}=\frac{1}{2}\left(w_x-\frac{1}{i} w_y\right).$$
The derivatives of $w$  in polar coordinates can be expressed as
$$w_\tau(z):=\frac{\partial w(z)}{\partial
\tau}=i(zw_z-\overline{z}w_{\bar z})$$ and $$w_r(z):=\frac{\partial
w(z)}{\partial r}=(e^{i\tau}w_z+e^{-i\tau}w_{\bar z}).$$ The
Jacobian determinant of $w$ is expressed in polar coordinates as
\begin{equation}\label{jaco}J_w(z)=|w_z|^2-|w_{\bar
z}|^2=\frac{1}{r}\mathrm{Im}(h_\tau\overline{h_r})=\frac{1}{r}\mathrm{Re}(i
h_r\overline{h}_\tau).\end{equation}
Assume that $w=P[F](z)$ is a
harmonic function defined on the unit disk $\mathbf U$. Then there
exist two analytic functions $h$ and $k$ defined in the unit disk
such that $w=h+\overline{k}$. Moreover $w_\tau=i(z h'(z)-\bar z
\overline{k'(z)})$ is a harmonic function and $rw_r=zh'(z)+\bar z
\overline{k'(z)}$ is its harmonic conjugate.

It follows from \eqref{e:POISSON} that $w_\tau$ equals the
Poisson--Stieltjes integral of $F'$:
\[\begin{split}w_\tau(re^{i\tau}) &=\int_0^{2\pi}\partial_\tau
P(r,\tau-t)F(t)dt\\&=-\int_0^{2\pi}\partial_t
P(r,\tau-t)F(t)dt\\&=-\int_0^{2\pi}\partial_t
P(r,\tau-t)F(t)dt\\&=-P(r,\tau-t)F(t)|_{t=0}^{2\pi}+\int_0^{2\pi}
P(r,\tau-t)dF(t)\\&= \int_0^{2\pi}P(r,\tau-t)dF(t).\end{split}\]
Hence, by Fatou's theorem, the radial limits of $w_\tau$ exist a.e.
and
$$\lim_{r\to 1^-} w_\tau(re^{i\tau}) = F_0'(\tau),\ \  a.e.,$$ where
$F_0$ is the absolutely continuous part of $F$.

As $rw_r$ is the harmonic conjugate of $w_\tau$, it turns out that
if $F$ is absolutely continuous, then
\begin{equation}\label{amp}\lim_{r\to 1^-}w_r(re^{i\tau}) =
H(F')(\tau)\,\, (a.e.),\end{equation} and
\begin{equation}\label{pam}\lim_{r\to 1^-} w_\tau(re^{i\tau}) =
F'(\tau)\ \ \ (a.e).\end{equation}

\section{The proof of the main theorem}

The aim of this chapter is to prove Theorem~\ref{ma}. We will
construct a suitable sequence $w_n$ of univalent harmonic mappings,
converging almost uniformly to $w=P[F]$.  In order to do so, we will
mollify the boundary function $F$, by a sequence of diffeomorphism
$F_n$ and take the Poisson extension $w_n=P[F_n]$. We will show that
under the condition of Theorem~\ref{ma} for large $n$, $w_n$
satisfies the conditions of theorem of Alessandrini and Nesi. By a
result of Hengartner and Schober \cite{hsc}, the limit function $w$
of a locally uniformly convergent sequence of univalent harmonic
mappings $w_n$ is univalent, providing that $F$ is a surjective
mapping.

We begin by the following lemma.

\begin{lemma}\label{form} Let $\gamma$ be a Dini smooth Jordan curve,  denote by $g$ its
arc-length parameterization and assume that $F(t)=g(f(t))$ is a
Lipschitz weak homeomorphism from the unit circle onto $\gamma$. If
$w(z)=u(z)+iv(z)=P[F](z)$ is the Poisson extension of $F$, then for
almost every $\tau\in [0,2\pi]$ exists the limit
$$J_w(e^{i\tau}) :=\lim_{r\to 1^-} J_w(re^{i\tau})$$ and there holds the
formula
\begin{equation}\label{jacfor}\begin{split}J_w(e^{i\tau})&= f'(\tau)\int_0^{2\pi}
\frac{\mathrm{Re}\,[\overline{(g(f(t))-g(f(\tau)))}\cdot i
g'(f(\tau))]}{2\sin^2\frac{t-\tau}{2}}dt.\end{split}\end{equation}
\end{lemma}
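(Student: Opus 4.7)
The natural plan is to combine the polar-form expression $J_w=\frac{1}{r}\,\mathrm{Im}(w_\tau\overline{w_r})$ from \eqref{jaco} with the boundary identities \eqref{pam}--\eqref{amp}. Since $F$ is Lipschitz, it is absolutely continuous with $F'\in L^\infty(\mathbf T)$; hence for a.e.\ $\tau$ one has $\lim_{r\to 1^-}w_\tau(re^{i\tau})=F'(\tau)$ and $\lim_{r\to 1^-}w_r(re^{i\tau})=H(F')(\tau)$. Passing to the limit in the Jacobian formula establishes the existence of $J_w(e^{i\tau})$ almost everywhere, together with the preliminary identity
$$
J_w(e^{i\tau})=\mathrm{Im}\bigl(F'(\tau)\,\overline{H(F')(\tau)}\bigr).
$$

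The next step is to rewrite this in the integral form of \eqref{jacfor}. Writing $F'(\tau)=g'(f(\tau))f'(\tau)$, I would factor out the scalar $f'(\tau)$ and convert $\mathrm{Im}(z\overline{w})$ into the real inner product $\langle z,iw\rangle$, obtaining
$$
J_w(e^{i\tau})=-\,f'(\tau)\,\bigl\langle ig'(f(\tau)),\,H(F')(\tau)\bigr\rangle.
$$
Fix $\tau$, set $\nu:=ig'(f(\tau))$ (the inner unit normal to $\gamma$ at $g(f(\tau))$), and consider the scalar periodic function $h(t):=\langle F(t),\nu\rangle$. Linearity of $H$ gives $\langle \nu,H(F')(\tau)\rangle=H(h')(\tau)$, so the problem reduces to the scalar identity
$$
H(h')(\tau)\;=\;c\cdot\text{p.v.}\int_0^{2\pi}\frac{h(t)-h(\tau)}{2\sin^2\frac{t-\tau}{2}}\,dt
$$
for the appropriate normalizing constant $c$ fixed by the paper's convention for $H$.

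I would prove this scalar identity by integration by parts driven by the elementary relation $-\frac{d}{dt}\cot\frac{t-\tau}{2}=\frac{1}{2\sin^2((t-\tau)/2)}$. Integrating on the symmetric excision $[0,\tau-\varepsilon]\cup[\tau+\varepsilon,2\pi]$ and letting $\varepsilon\to 0^+$, the boundary contributions at $0$ and $2\pi$ cancel by $2\pi$-periodicity, while the ones at $\tau\pm\varepsilon$ vanish in the limit at every point of differentiability of $h$ (hence a.e.): indeed $h(\tau\pm\varepsilon)-h(\tau)=\pm\varepsilon\,h'(\tau)+o(\varepsilon)$ and $\cot(\varepsilon/2)\sim 2/\varepsilon$ is even, so the two endpoint contributions cancel out. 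What survives is the principal-value integral of $h'(t)\cot\frac{t-\tau}{2}$, which by the change of variable $s=t-\tau$, periodicity, and oddness of $\cot$ matches exactly the definition of $H(h')(\tau)$ up to the constant $c$.

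The principal technical obstacle --- and what makes the assertion more than a formal manipulation --- is to ensure that the integral in \eqref{jacfor} converges in the ordinary Lebesgue sense (not merely as a principal value) and to justify the boundary-term vanishing just described. The crucial observation is the orthogonality exploited throughout: the pairing $\langle F(t)-F(\tau),\nu\rangle$ equals $K_F(t,\tau)=f'(\tau)\,K(f(t),f(\tau))$ by \eqref{kg}, and by \eqref{oh} this quantity is controlled by $f'(\tau)\int_0^{L|t-\tau|}\omega(s)\,ds$, where $L=\mathrm{Lip}(f)$ and $\omega$ is the modulus of continuity of $g'$. Combined with $2\sin^2((t-\tau)/2)\sim (t-\tau)^2/2$ and the Dini-type computation \eqref{goodd} (which applies because $\gamma$ is Dini smooth), this gives $L^1$-integrability of the integrand near $t=\tau$ and simultaneously legitimizes all the limiting arguments above.
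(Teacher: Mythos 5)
Your argument is correct, and it reaches \eqref{jacfor} by a genuinely different route than the paper. The paper never invokes the identity $\lim_{r\to1^-}w_r=H(F')$ in this lemma: instead it replaces $w_r(re^{i\tau})$ by the difference quotient $(w(e^{i\tau})-w(re^{i\tau}))/(1-r)$ (justified a.e.\ via the mean value theorem applied to $u$ and $v$), writes that difference as the Poisson integral of $F(\tau)-F(t)$, so that $J_w(re^{i\tau})$ becomes $\int_{-\pi}^{\pi}K_F(t+\tau,\tau)\frac{P(r,t)}{1-r}\,dt$, and then passes to the limit under the integral sign by dominated convergence, the dominant $Q(t)$ being built from exactly the two ingredients you cite, the kernel bound \eqref{oh} and the Dini computation \eqref{goodd}; the kernel $\frac{1}{2\sin^2}$ then appears as $\lim_{r\to1^-}P(r,t)/(1-r)$, with no principal values ever entering. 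You instead take the limit in $J_w=\frac1r\,\mathrm{Im}(w_\tau\overline{w_r})$ directly via \eqref{pam}--\eqref{amp}, land on $\mathrm{Im}\bigl(F'(\tau)\overline{H(F')(\tau)}\bigr)$, and recover the $\frac{1}{2\sin^2}$ kernel by integrating the principal value defining $H(h')$ by parts; your endpoint analysis (symmetric difference $o(\varepsilon)$ at points of differentiability, cancellation at $0$ and $2\pi$ by periodicity) is sound and is consistent with the a.e.\ nature of the conclusion. What your route buys is the clean intermediate identity $J_w(e^{i\tau})=\mathrm{Im}(F'\overline{H(F')})$, which is essentially what Section~4 runs on, and your integration by parts against $\cot\frac{t}{2}$ is the very computation the paper performs later, in the proof of Theorem~\ref{ma}, to rewrite $T[f]$ --- you simply run it in the opposite direction at the level of this lemma; what the paper's route buys is that the integral in \eqref{jacfor} is obtained directly as an absolutely convergent limit, with no need to pre-establish the scalar principal-value identity. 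One caveat on constants: both your derivation and the paper's own produce a factor $\frac{1}{2\pi}$ in front of the integral that is absent from the displayed formula \eqref{jacfor} (the paper silently restores the $\frac{dt}{2\pi}$ when it writes $T[f]$ in the proof of the main theorem), so your decision to leave the normalization to be ``fixed by the paper's convention for $H$'' is the right one.
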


\begin{proof} Let $z=r e^{i\tau}$. Since $F$ is Lipschitz it is
absolutely continuous and by \eqref{pam} and \eqref{amp} we obtain
%
that there exist radial
derivatives of $w_{\tau}$ and $w_r$ for a.e. $e^{i\tau}\in \mathbf
T$.
%
By Fatou's theorem (see e.g. \cite[Theorem~6.39]{ABR}, c.f.
\eqref{pam})), we have
\begin{equation}\label{e:REEQ}
\lim_{r\to 1^-}w_{\tau}(re^{i\tau})= F'(\tau)
\end{equation}
for almost every $e^{i\tau}\in \mathbf T$.

Further for a.e. $\tau\in[0,2\pi]$, by using Lagrange theorem we
have
$$\frac {u( e^{i\tau})-u(re^{i\tau})}{1-r}=u_r(p e^{i\tau}),\ \ \
r<p<1$$ and $$\frac {v( e^{i\tau})-v(re^{i\tau})}{1-r}=v_r(q
e^{i\tau}),\ \ \ r<q<1.$$ It follows that for a.e. $\tau\in[0,2\pi]$
\begin{equation}\label{ur} \lim_{r\to 1^-} \frac {u(
e^{i\tau})-u(re^{i\tau})}{1-r}=\lim_{r\to 1^-}u_r(r(e^{i\tau}))
\end{equation}
and
\begin{equation}\label{vr} \lim_{r\to 1^-} \frac
{v( e^{i\tau})-v(re^{i\tau})}{1-r}=\lim_{r\to 1^-}v_r(r(e^{i\tau}))
\end{equation}
and consequently for a.e. $\tau\in[0,2\pi]$
\begin{equation}\label{wr} \lim_{r\to 1^-} \frac {w(
e^{i\tau})-w(re^{i\tau})}{1-r}=\lim_{r\to 1^-}w_r(r(e^{i\tau})).
\end{equation}
 By using the previous facts and the formulas $$w(
e^{i\tau})-w(re^{i\tau})=\int_0^{2\pi}[F(\tau)-F(t)]P(r,\tau-t)dt$$
and \eqref{jaco} we obtain:
\begin{equation}\label{last}
\begin{split}
\lim_{r \to 1^-} J_w(re^{i\tau})&=\lim_{r \to
1^-}\frac{\mathrm{Re}[iw_r(re^{i\tau})\overline{w_\tau(re^{i\tau})}]}{r}\\&=\lim_{r
\to
1^-}\frac{\mathrm{Re}[i(w(e^{i\tau})-w(re^{i\tau}))\overline{w_\tau(re^{i\tau})}]}{(1-r)r}\\&=\lim_{r
\to 1^-}\frac{1}{1-r}\int_0^{2\pi}
P(r,\tau-t)\mathrm{Re}[i(F(\tau)-F(t))\overline{F'(\tau)}]dt\\&=
\lim_{r\to 1^-}\int_{-\pi}^{\pi} K_F(t+\tau,\tau)\frac
{P(r,t)}{1-r}dt,\ \  a.e.\end{split}\end{equation}

where \begin{equation}\label{K} K_F(t,\tau)= f'(\tau)
\text{Re}\,[\overline{(g(f(t))-g(f(\tau)))}\cdot i
g'(f(\tau)))],\end{equation} and $P(r,t)$ is the Poisson kernel. We
refer to \cite[Eq.~5.6]{Rl} for a similar approach, but for some
other purpose.
\\ To continue, observe first that
$$\frac{P(r,t)}{1-r}=\frac {1+r}{2\pi(1+r^2-2r\cos t)}
\leq \frac 1{\pi((1-r)^2+4r\sin^2t/2)}\le \frac{\pi}{4rt^2}$$ for
$0<r<1$ and $t\in [-\pi,\pi]$ because $|\sin(t/2)|\ge t/\pi$.
\\
On the other hand by \eqref{oh} and \eqref{K}, for $$\sigma =
\min\{|f(t+\tau)-f(\tau)|, l-|f(t+\tau)-f(\tau)|\}$$ we obtain
$$\abs{K_F(t+\tau,\tau)}\le
\|F'\|_{\infty}\int_0^{\sigma}\omega(u)du,$$ where $\omega$ is the
modulus of continuity of $g'$. Therefore for $r\ge 1/2$,
\begin{equation}\label{eee}\begin{split}\abs{K_F(t+\tau,\tau)\frac
{P(r,t)}{1-r}}&\le \frac{\|F'\|_{\infty}\pi}{4r
t^2}\int_0^{\sigma}\omega(u)du\\&\le\frac{\sigma}{t}\frac{\|F'\|_{\infty}\pi}{4r
t^2}\int_0^{t}\omega\left(\frac{\sigma}{t}u\right)du\\&\le
\frac{\pi\|F'\|_{\infty}^2}{2}\frac{1}{t^2}\int_0^{t}\omega(\|F'\|_{\infty}u)du:=Q(t).\end{split}\end{equation}
Thus $Q(t)$ is a dominant for the expression
$$\abs{K_F(t+\tau,\tau)\frac {P(r,t)}{1-r}},$$ for $r\ge 1/2$. Having
in mind the equation \eqref{goodd}, we obtain
\[\begin{split}\int_{-\pi}^{\pi} \abs{Q(t)}dt&\le
\frac{2\pi\|F'\|_{\infty}^2}{2}\int_0^{\pi}\frac{1}{t^2}\int_0^{t}\omega(\|F'\|_{\infty}u)du
\\&={\pi\|F'\|_{\infty}^2}\int_0^{\pi}\left(\frac{\omega(\|F'\|_{\infty}u)}{u}-\frac{\omega(\|F'\|_{\infty}u)}{\pi}
\right)du \\&<M<\infty.\end{split}\]

 According to the
Lebesgue Dominated Convergence Theorem, taking the limit under the
integral sign in the last integral in
 \eqref{last} we obtain \eqref{jacfor}.

\end{proof}

 For a
Lipschitz non-decreasing function $f$ and an arc-length
parametrization $g$ of the Dini's smooth curve $\gamma$ we define
the operator $T$ as follows
\begin{equation}\label{Ff}T[f](\tau)=\int_0^{2\pi}
\frac{\mathrm{Re}\,[\overline{(g(f(t))-g(f(\tau)))}\cdot i
g'(f(\tau)))]}{2\sin^2\frac{t-\tau}{2}}dt, \tau\in
[0,2\pi].\end{equation} According to Lemma~\ref{form}, this integral
converges. Notice that if $\gamma$ is a convex Jordan curve then
$\mathrm{Re}\,[\overline{(g(f(t))-g(f(\tau)))}\cdot i
g'(f(\tau))]\ge 0$, and therefore $T[f]>0$. In the next proof, we
will show that under the integral condition $T[f]>0$ the harmonic
extension of a bi-Lipschitz mapping is a diffeomorphism regardless
of the condition of convexity.

\begin{proof}[Proof of Theorem~\ref{ma}]
Assume for simplicity that $|\gamma|= 2\pi$. The general case
follows by normalization. Let $g:[0,2\pi]\to \gamma$ be an arc
length parametrization of $\gamma$. Then $F(e^{it})=g(f(t))$, where
$f:\mathbf R\to \mathbf R$ is a Lipschitz weak homeomorphism such
that $f(t+2\pi) = f(t) + 2\pi$. From \eqref{Ff} we have
\[\begin{split}T[f](\tau)&= \lim_{\epsilon\to
0^+}\int_{\epsilon}^{\pi}
\frac{\text{Re}\,[\overline{(g(f(t+\tau))-g(f(\tau)))}\cdot i
g'(f(\tau)))]}{2\sin^2\frac{t}{2}}\frac{dt}{2\pi}
\\&+\lim_{\epsilon\to
0^+}\int_{-\pi}^{-\epsilon}
\frac{\text{Re}\,[\overline{(g(f(t+\tau))-g(f(\tau)))}\cdot i
g'(f(\tau)))]}{2\sin^2\frac{t}{2}}\frac{dt}{2\pi} .\end{split}\]
Assume that $\beta:[0,2\pi]\to \mathbf R$ is a continuous functions
such that
\begin{equation}\label{gprim}g'(s) = e^{i\beta(s)},\ \ \beta(0) =\beta(2\pi).\end{equation} Then
\begin{equation}\label{uma}|g'(s)-g'(t)|=
2\abs{\sin\frac{\beta(t)-\beta(s)}{2}}.\end{equation} Let
$\omega_\beta$ be the modulus of continuity of $g'$. Then
\begin{equation}\label{david1}\omega_\beta(\rho) = \max_{|t-s|\le
\rho}2\abs{\sin\frac{\beta(t)-\beta(s)}{2}}.\end{equation} Since
$\gamma\in C^{1,\alpha}$,
\begin{equation}\label{david}\omega_\beta(\rho)\le c(\gamma)
\rho^\alpha.\end{equation} Further from \eqref{gprim}, we have
\[\begin{split}\frac{\text{Re}\,[\overline{(g(f(t+\tau))-g(f(\tau)))}\cdot i
g'(f(\tau))]}{2\sin^2\frac{t}{2}}
&=\frac{\text{Re}\,[\overline{\int_{f(\tau)}^{f(t+\tau)}g'(s)ds}\cdot
i g'(f(\tau))]}{2\sin^2\frac{t}{2}}\\
&=\frac{\text{Re}\,[\overline{\int_{f(\tau)}^{f(t+\tau)}e^{i\beta(s)}ds}\cdot
i e^{i\beta(\tau)}]}{2\sin^2\frac{t}{2}}
\\&=\frac{-\text{Im}\,[\overline{\int_{f(\tau)}^{f(t+\tau)}e^{i\beta(s)-\beta(\tau)}ds}]}{2\sin^2\frac{t}{2}} \\&=
\frac{\int_{f(\tau)}^{f(t+\tau)}\sin[\beta(s)-\beta(f(\tau))]ds}{2\sin^2\frac
t2}.\end{split}\] Taking $$dU = \frac{1}{2\sin^2\frac t2}dt\text{ \
and \ \ } V
=\int_{f(\tau)}^{f(t+\tau)}\sin[\beta(s)-\beta(f(\tau))]ds,$$ we
obtain  that $$U = -\cot\frac t2 \ \ \text{and}  \ \ dV =
f'(t+\tau)\sin[\beta(f(t+\tau))-\beta(f(\tau))]dt.$$ To continue
recall that $f$ is Lipschitz with a Lipschitz constant $L$. Thus
\[\begin{split}|\lim_{\epsilon\to 0^+}U(t)V(t)|_{\epsilon}^\pi|
&=\left|\lim_{\epsilon\to 0^+} \cot \frac{\epsilon}{2}
\int_{f(\tau)}^{f(\epsilon+\tau)}\sin[\beta(s)-\beta(f(\tau))]ds
\right|\\&\le\lim_{\epsilon\to 0^+} \cot
\frac{\epsilon}{2}|\sin[\beta(\epsilon+\tau)-\beta(f(\tau))]|
|f(\epsilon+\tau)-f(\tau)|\\& \le\lim_{\epsilon\to 0^+}
L\epsilon\cot \frac{\epsilon}{2}
\omega_\beta(\epsilon)=0.\end{split}\]
 Similarly we have
$$\lim_{\epsilon\to 0^+}U(t)V(t)|_{-\pi}^{-\epsilon} =0.$$  By a partial integration we obtain
\[\begin{split} T[f](\tau)&= \lim_{\epsilon\to 0^+} \left(UV|_{\epsilon}^\pi +\int_\epsilon^\pi  f'(t+\tau)\cdot
\sin[\beta(f(t+\tau))-\beta(f(\tau))]\cot \frac t2
\frac{dt}{2\pi}\right)\\&+\lim_{\epsilon\to 0^+}
\left(UV|_{-\pi}^{-\epsilon} + \int_{-\pi}^{-\epsilon}
f'(t+\tau)\cdot \sin[\beta(f(t+\tau))-\beta(f(\tau))]\cot \frac t2
\frac{dt}{2\pi}\right)\\&=\int_{-\pi}^{\pi}f'(t+\tau)\cdot
\sin[\beta(f(t+\tau))-\beta(f(\tau))]\cot \frac t2
\frac{dt}{2\pi}.\end{split}\]  Hence
$$T[f](\tau)=\int_{-\pi}^{\pi}f'(t+\tau)\cdot
\sin[\beta(f(t+\tau))-\beta(f(\tau))]\cot \frac t2
\frac{dt}{2\pi}.$$ By using Lemma~\ref{dri}, we can choose a family
of diffeomorphisms $f_n$ converging uniformly to $f$. Then
$$T[f_n](\tau)=\int_{-\pi}^{\pi}f_n'(t+\tau)\cdot
\sin[\beta(f_n(t+\tau))-\beta(f_n(\tau))]\cot \frac t2
\frac{dt}{2\pi}.$$ We are going to show that $T[f_n]$ converges
uniformly to $T[f]$. In order to do this, we apply Arzela-Ascoli
theorem.

First of all \[\begin{split}|T[f_n](\tau)|&\le
\frac{1}{\pi}\|f_n'\|_{\infty}\int_{0}^{\pi}\omega_\beta(\|f_n'\|_\infty
t)\cot \frac t2 dt
\\&\le \frac{1}{\pi}\|f'\|_{\infty}\int_{0}^{\pi}\omega_\beta(\|f'\|_\infty t)\cot \frac t2 dt
= C(f,\gamma)<\infty.\end{split}\] We prove now that $T[f_n]$ is
equicontinuous family of functions. We have to estimate the
quantity: $$|T[f_n](\tau) - T[f_n](\tau_0)|.$$ Assume without loss
of generality that $\tau_0 = 0 $. Then
\[\begin{split}|T[f_n](\tau) -
T[f_n](0)|&=\left|\int_{-\pi}^{\pi}f_n'(t+\tau)\cdot
\sin[\beta(f_n(t+\tau))-\beta(f_n(\tau))]\cot \frac t2
\frac{dt}{2\pi}\right.\\&-\left.\int_{-\pi}^{\pi}f_n'(t)\cdot
\sin[\beta(f_n(t))-\beta(f_n(0))]\cot \frac t2
\frac{dt}{2\pi}\right|\le A + B,\end{split}\] where $$ A =
\left|\int_{-\pi}^{\pi}(f_n'(t+\tau)-f_n'(t))\cdot
\sin[\beta(f_n(t+\tau))-\beta(f_n(\tau))]\cot \frac t2
\frac{dt}{2\pi}\right|$$ and $$ B =
\left|\int_{-\pi}^{\pi}f_n'(t)\cdot
\{\sin[\beta(f_n(t))-\beta(f_n(0))]-\sin[\beta(f_n(t+\tau))-\beta(f_n(\tau))]\}\cot
\frac t2 \frac{dt}{2\pi}\right|.$$ Take $r\ge 1$, $p>1$, $q>1$ such
that $$\frac 1p + \frac 1q = 1,$$and $\delta\in (0,1)$.

 In what follows, for a function $g\in L^p(\mathbf
T)$ we have in mind the following $p-$norm:
$$\|g\|_p =
\left(\int_0^{2\pi}\abs{g(e^{it})}^p\frac{dt}{2\pi}\right)^{1/p}.$$
Define $f_\tau(x):=f(x+\tau)$. By \eqref{aar} we have
$$f_{n}=\frac{
n}{n+1}\left(f\ast\rho_{\varepsilon_n}+\frac{x}{n}\right).$$ Thus
\begin{equation}\label{varas}|f'_{n,\tau}-f'_n|=\frac{n}{n+1}|(f'_\tau-f')\ast \rho_{\varepsilon_n}|.\end{equation} According to Young's inequality for convolution
(\cite[pp. 54-55; 8, Theorem 20.18]{young}), we obtain that
$$\|(f'_\tau-f')\ast \rho_{\varepsilon_n}\|_r\le \|f'_{\tau} - f'\|_r.$$ In view of
\eqref{david} and \eqref{varas}, for $1<q<\frac{1}{1-\alpha}$,
according to H\"older inequality we have
\[\begin{split}A &\le \|f_n'(t+\tau)-f_n'(t)\|_p\cdot \|
\sin[\beta(f_n(t+\tau))-\beta(f_n(\tau))]\cot \frac t2
\frac{1}{2\pi}\|_q\\&\le \|f'(t+\tau)-f'(t)\|_p\cdot \|
\omega_\beta(|f_n|_\infty t)\cot \frac t2 \frac{1}{2\pi}\|_q \\&\le
C_1(\gamma)\|f'\|_\infty\|f'(t+\tau)-f'(t)\|_p,\end{split}\] i.e.
\begin{equation}\label{AA}
A \le C_1(\gamma)\|f'\|_\infty\|f'(t+\tau)-f'(t)\|_p.
\end{equation}
Let now estimate $B$. First of all \begin{equation}\label{beb} B \le
\|f'\|_\infty
\|\{\sin[\beta(f_n(t))-\beta(f_n(0))]-\sin[\beta(f_n(t+\tau))-\beta(f_n(\tau))]\}\cot
\frac t2\|_1.
\end{equation}
On the other hand, using again H\"older inequality we have
\[\begin{split}\|\{\sin&[\beta(f_n(t))-\beta(f_n(0))]-\sin[\beta(f_n(t+\tau))-\beta(f_n(\tau))]\}\cot
\frac t2\|_1\\&\le
\|\{\sin[\beta(f_n(t))-\beta(f_n(0))]-\sin[\beta(f_n(t+\tau))-\beta(f_n(\tau))]\}^\delta\|_p\\&
\times\|\{\sin[\beta(f_n(t))-\beta(f_n(0))]-\sin[\beta(f_n(t+\tau))-\beta(f_n(\tau))]\}^{1-\delta}\cot
\frac t2\|_q.
\end{split}
\]
Further
\[\begin{split}\|\{\sin&[\beta(f_n(t))-\beta(f_n(0))]-\sin[\beta(f_n(t+\tau))-\beta(f_n(\tau))]\}^\delta\|_p
\\&\le\|\{\abs{2\sin\frac{\beta(f_n(t))-\beta(f_n(0))-\beta(f_n(t+\tau))+\beta(f_n(\tau))}{2}}\}^\delta\|_p\\&\le
\|\{\abs{2\sin\frac{\beta(f_n(t+\tau))-\beta(f_n(t))}2}\}^\delta\|_p\\&+
\|\{\abs{2\sin\frac{\beta(f_n(\tau))-\beta(f_n(0))}2}\}^\delta\|_p\\&\le
\omega_\beta(|f_n'|_{\infty}\tau)^\delta+\omega_\beta(|f_n'|_{\infty}\tau)^\delta
= 2\omega_\beta(|f_n'|_{\infty}\tau)^\delta\le
2\omega_\beta(|f'|_\infty\tau)^\delta,
\end{split}
\]
and
\[\begin{split}\|\{\sin[\beta(f_n(t))&-\beta(f_n(0))]-\sin[\beta(f_n(t+\tau))-\beta(f_n(\tau))]\}^{1-\delta}\cot
\frac t2\|_q\\&\le
\|(2\omega_\beta(|f_n'|_{\infty}t)^{1-\delta}\cot\frac
t2\|_q.\end{split}\] Choose $q$ and $\delta$ such that
$$(\alpha - \alpha \delta - 1) q>-1.$$
Then the integral
$$\|2\omega_\beta(|f_n'|_{\infty}t)^{1-\delta}\cot\frac t2\|_q$$
converges and it is less or equal to $$
C(\gamma)\|f_n'\|_{\infty}^{1-\delta}\le
C(\gamma)\|f'\|_{\infty}^{1-\delta}.  $$ Therefore
\begin{equation}\label{BB}B \le 2\|f'\|_\infty
C(\gamma)\|f'\|_{\infty}^{1-\delta}\omega_\beta(\|f'\|_\infty\tau)^\delta.\end{equation}
Since a translation is continuous (see \cite[Theorem~9.5]{stein}),
\eqref{AA} and \eqref{BB} imply that the family $\{T[f_n]\}$ is
equicontinuous.  By Arzela-Ascoli theorem it follows that
$$\lim_{n\to \infty} \|T[f_n] - T[f]\|_{\infty}=0.$$ Thus $T[f]$ is
continuous.
\\
Moreover, since $f_n$ is a diffeomorphism, for $n$ sufficiently
large there holds the following inequality
$$J_{w_n}(e^{i\tau})=f_n'(\tau)T[f_n](e^{i\tau})>0, e^{i\tau}\in
\mathbf T.$$ Since $f_n\in C^{\infty}$, it follows that
$$w_n = P[F_n]\in C^1(\overline{\mathbf U}).$$ Therefore all the
conditions of Proposition~\ref{ales} are satisfied. This means that
$w_n$ is a harmonic diffeomorphism of the unit disk onto the domain
$D$.

Since, by a result of Hengartner and Schober \cite{hsc}, the limit
function $w$ of a locally uniformly convergent sequence of univalent
harmonic mappings $w_n$ on $\mathbf U$ is either univalent on
$\mathbf U$, is a constant, or its image lies on a straight-line, we
obtain that $w=P[F]$ is univalent. The proof is completed.
\end{proof}

\begin{remark}\label{rere} If $\gamma$ is a $C^{1,\alpha}$ convex curve, then
$\mathrm{Re}\,[\overline{(g(f(t))-g(f(\tau)))}\cdot i
g'(f(\tau))]\ge 0$ and therefore  $T[f](\tau)>0$. By the proof of
Theorem~\ref{ma}, $\tau\to T[f](\tau)$ is continuous. Therefore
$\min_{\tau\in[0,2\pi]}T[f](\tau)=\delta>0$.

\end{remark}
\section{Quasiconformal harmonic mappings}
An injective harmonic mapping $w=u+iv$, is called
$K$-\emph{quasiconformal} ($K$-q.c), $K\ge 1$, if
\begin{equation}\label{qc}|w_{\bar z}|\le k|w_z|\end{equation} on $D$ where $k={(K-1)}/{(K+1)}$.
Here
$$w_z := \frac{1}{2}\left(w_x-iw_y\right)\text{ and } w_{\bar z} := \frac{1}{2}\left(w_x+i
w_y\right).$$ Notice that, since if $|\nabla w(z)|:=\max\{|\nabla
w(z) h|: |h|=1\}=|w_z|+|w_{\bar z}|$ $l(\nabla w(z)):=\min\{|\nabla
w(z) h|: |h|=1\}=||w_z|-|w_{\bar z}||$, the condition \eqref{qc} is
equivalent with \begin{equation} \label{1ll}|\nabla w(z)|\le K
l(\nabla w(z)).
\end{equation} For a general definition of quasiregular mappings and quasiconformal mappings we
refer to the book of Ahlfors \cite{Ahl}. In this section we apply
Theorem~\ref{ma} to the class of q.c. harmonic mappings. The area of
quasiconformal harmonic mappings is very active
 area of research. For a background on this theory we
refer \cite{hs}, \cite{trans}-\cite{kojic}, \cite{om}, \cite{pk},
\cite{mp} and \cite{wan}. In this section we obtain some new results
concerning a characterization of this class. We will restrict
ourselves to the class of q.c. harmonic mappings $w$ between the
unit disk $\mathbf U$ and a Jordan domain $D$. The unit disk is
taken because of simplicity. Namely, if $w:\Omega\to D$ is q.c.
harmonic, and $a:\mathbf U\to \Omega$ is conformal, then $w\circ a$,
is also q.c. harmonic. However the image domain $D$ cannot be
replaced by the unit disk.

The case when $D$ is a convex domain is treated in detail by the
author and others in above cited papers. In this section we will use
our main result to yield a characterization of quasiconformal
harmonic mappings onto a Jordan that is not necessarily convex in
terms of boundary data.

%
To state the main result of this section, we make use of Hilbert
transforms formalism. It provides a necessary and a sufficient
condition for the harmonic extension of a homeomorphism from the
unit circle to a $C^{2}$ Jordan curve $\gamma$ to be a q.c mapping.
It is an extension of the corresponding result
\cite[Theorem~3.1]{mathz} related to convex Jordan domains.
\begin{theorem}\label{convex1} Let $F:\mathbf T\to \gamma$ be a sense preserving
homeomorphism of the unit circle onto the Jordan curve
$\gamma=\partial D \in C^{2}$. Then $w=P[F]$ is a quasiconformal
mapping of the unit disk onto $D$ if and only if $F$ is absolutely
continuous and
\begin{equation}\label{1first}0<l(F):=\mathrm{ess\,inf\,} l(\nabla w(e^{i\tau})),\end{equation}
\begin{equation}\label{1second}\|F'\|_\infty:= \mathrm{ess\,sup\,}|F'(\tau)|<\infty\end{equation} and
\begin{equation}\label{1third}\|H(F')\|_\infty:=\mathrm{ess}\sup |H(F')(\tau)|<\infty.\end{equation}
If $F$ satisfies the conditions \eqref{1first}, \eqref{1second} and
\eqref{1third}, then $w=P[F]$ is $K$ quasiconformal, where
\begin{equation}\label{KK1}K:=\frac{\sqrt{\|F'\|^2_\infty +
\|H(F')\|^2_\infty-l(F)^2}}{l(F)}.\end{equation} The constant $K$ is
approximately sharp for small values of $K$: if $w$ is the identity
or if it is a mapping close to the identity, then $K=1$ or $K$ is
close to $1$ (respectively).
\end{theorem}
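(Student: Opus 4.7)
The plan is to establish both directions of the equivalence; the main novelty lies in the sufficient direction, which combines Theorem~\ref{ma} with a maximum principle applied to the analytic dilatation of $w$.

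\textbf{Sufficient direction.} Assume \eqref{1first}, \eqref{1second}, \eqref{1third} hold. The combination of absolute continuity and $\|F'\|_\infty<\infty$ shows that $F$ is Lipschitz, so the continuous extension $T_w$ of $J_w(e^{i\tau})/|F'(\tau)|$ provided by Theorem~\ref{ma} is available. At a.e.\ boundary point, $|F'(\tau)|=|w_\tau(e^{i\tau})|\le |\nabla w(e^{i\tau})|$, and combining this with the identity $J_w=|\nabla w|\,l(\nabla w)$ yields
\[
T_w(e^{i\tau})=\frac{|\nabla w(e^{i\tau})|\,l(\nabla w(e^{i\tau}))}{|F'(\tau)|}\ge l(\nabla w(e^{i\tau}))\ge l(F)>0.
\]
Since $T_w$ is continuous, this inequality holds \emph{everywhere} on $\mathbf T$, so Theorem~\ref{ma} concludes that $w=P[F]$ is a harmonic diffeomorphism of $\mathbf U$ onto $D$.

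\textbf{The quasiconformality constant.} Write $w=h+\overline k$ with $h,k$ analytic in $\mathbf U$. By Lewy's theorem $h'$ is zero-free, so the ratio $a(z):=k'(z)/h'(z)$ is analytic in $\mathbf U$ with $|a(z)|=|w_{\bar z}(z)|/|w_z(z)|$, whence $|a|$ is subharmonic. On the boundary, the Frobenius identity
\[
|\nabla w|^2+l(\nabla w)^2=|w_\tau|^2+|w_r|^2,
\]
combined with \eqref{amp} and \eqref{pam}, gives $|\nabla w(e^{i\tau})|^2+l(\nabla w(e^{i\tau}))^2=|F'(\tau)|^2+|H(F')(\tau)|^2$ a.e. Hence
\[
\frac{|\nabla w(e^{i\tau})|^2}{l(\nabla w(e^{i\tau}))^2}=\frac{|F'(\tau)|^2+|H(F')(\tau)|^2}{l(\nabla w(e^{i\tau}))^2}-1\le\frac{\|F'\|_\infty^2+\|H(F')\|_\infty^2}{l(F)^2}-1=K^2,
\]
with $K$ as in \eqref{KK1}. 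Since $|\nabla w|/l(\nabla w)=(1+|a|)/(1-|a|)$ is strictly increasing in $|a|$, this bound is equivalent to $|a(e^{i\tau})|\le k:=(K-1)/(K+1)$ a.e.\ on $\mathbf T$. The maximum principle for the subharmonic function $|a|$ then transfers the bound to all of $\mathbf U$, so $w$ is $K$-quasiconformal.

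\textbf{Necessary direction and sharpness.} Conversely, suppose $w$ is q.c.\ harmonic. Since $\gamma\in C^2$, the boundary regularity theory for q.c.\ harmonic mappings cited in this section gives that $w$ extends bi-Lipschitz to $\overline{\mathbf U}$. The upper Lipschitz bound forces $F$ absolutely continuous with $\|F'\|_\infty<\infty$; the lower bi-Lipschitz bound together with the q.c.\ dilatation estimate $l(\nabla w)\ge (1-k)|\nabla w|$ forces $l(\nabla w)$ bounded below on the boundary, proving \eqref{1first}; and $|\nabla w|\in L^\infty(\mathbf U)$ entails $\|w_r\|_\infty<\infty$, which by \eqref{amp} yields \eqref{1third}. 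Sharpness for $K$ near $1$ follows by specializing to the identity $w(z)=z$, where \eqref{KK1} gives exactly $K=1$, and a continuity argument extends the conclusion to harmonic maps close to the identity. The main obstacle is the interior control of the dilatation; it is overcome by the observation that although $\mu=w_{\bar z}/w_z$ is not itself holomorphic, its modulus agrees with that of the genuinely analytic function $k'/h'$ (analytic thanks to $h'\neq 0$), so that the maximum principle delivers the sharp pointwise estimate from the boundary data alone.
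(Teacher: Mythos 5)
Your argument follows the paper's proof in all essentials: necessity via the cited boundary regularity results for q.c.\ harmonic maps (the paper invokes \cite[Theorem~2.1]{mathz} and \cite[Theorem~1.4]{kalajpisa}) plus boundedness of $rw_r=P[H(F')]$, and sufficiency via Theorem~\ref{ma}, the boundary identity $|w_z|^2+|w_{\bar z}|^2=\tfrac12\bigl(|w_r|^2+|w_\tau|^2\bigr)$, and the maximum principle applied to the analytic dilatation $a=k'/h'$, with your monotonicity argument for $(1+\mu)/(1-\mu)$ being an algebraic repackaging of the paper's quadratic inequality in $\mu$ that yields the identical constant $K=\sqrt{2S-1}$. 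The only blemish is the inequality $l(\nabla w)\ge(1-k)|\nabla w|$ in your necessity step, which should read $l(\nabla w)\ge|\nabla w|/K$; the qualitative conclusion you draw from it is unaffected.
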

\begin{proof}[{\bf The proof of necessity.}] Suppose that $w=P[F]=g+\overline h$ is a $K-$q.c. harmonic
mapping that satisfies the conditions of the theorem. By
\cite[Theorem~2.1]{mathz}, we see that $w$ is Lipschitz continuous,
\begin{equation}L:=\label{1be}\|F'\|_{\infty}<\infty\end{equation} and
\begin{equation}\label{kl}
|\nabla w(z)|\le KL.
\end{equation}  By \cite[Theorem~1.4]{kalajpisa}  we have for
$b=w(0)$
\begin{equation}\label{1equqc} |\partial w(z)|-|\bar\partial
w(z)|\geq C(\Omega, K,b)>0,\, z\in \mathbf U.
\end{equation} Because of \eqref{kl}, the analytic functions $\partial
w(z)$ and  $\bar \partial w(z)$ are bounded, and therefore by
Fatou's theorem:
\begin{equation}\label{1beo}\lim_{r\to 1^-}(|\partial
w(re^{i\tau})|-|\bar\partial w(re^{i\tau})|)= |\partial
w(e^{i\tau})|-|\bar\partial w(e^{i\tau})|\ \ a.e.\end{equation}
Combining \eqref{1be}, \eqref{1beo} and \eqref{1equqc}, we get
\eqref{1first} and \eqref{1second}.

Next we prove \eqref{1third}. Observe first that $
w_r=e^{i\tau}w_z+e^{-i\tau}w_{\overline z}.$ Thus
\begin{equation}\label{1derrad}|w_r|\le |\nabla w|\le
KL.\end{equation} Therefore $rw_r=P[H(F')]$ is a bounded harmonic
function which implies that $H(F')\in L^\infty(\mathbf T)$.
Therefore \eqref{1third} holds and the necessity proof is completed.
\\
{\bf The proof of sufficiency.} We have to prove that under the
conditions \eqref{1first},
 \eqref{1second} and
\eqref{1third} $w$ is quasiconformal.
 From
$$0<l(F):=\mathrm{ess\,inf\,} l(\nabla
w(e^{i\tau}))$$ we obtain that
$$J_w(e^{i\tau})= (|w_z|+|w_{\bar z}|)l(\nabla w(e^{i\tau}))\ge l(F)^2\ \  (a.e.)$$

As $rw_r$ is a harmonic conjugate of $w_\tau$, it turns out that if
$F$ is absolutely continuous, then
\begin{equation}\label{1amp}\lim_{r\to 1^-}w_r(re^{i\tau}) =
H(F')(\tau)\,\, (a.e.),\end{equation} and
\begin{equation}\label{1pam}\lim_{r\to 1^-} w_\tau(re^{i\tau}) =
F'(\tau)\  \ (a.e.).\end{equation} As $$|w_z|^2+|w_{\bar z}|^2 =
\frac 12 \left( |w_r|^2+ \frac{|w_\tau|^2}{r^2}\right),$$ it follows
that for a.e. $\tau\in[0,2\pi)$
\begin{equation}\label{1boundir}
\lim_{r\to 1^-}|w_z(re^{i\tau})|^2+|w_{\bar
z}(re^{i\tau})|^2=|w_z(e^{i\tau})|^2+|w_{\bar
z}(e^{i\tau})|^2\le\frac{1}{2}(\|F'\|^2_\infty +
\|H(F')\|^2_\infty).
\end{equation}
  To continue we make use of
\eqref{1first}. From \eqref{1boundir}, \eqref{1first} and
\eqref{1ll}, for a.e. $\tau\in[0,2\pi)$,
\begin{equation}\label{1ldef}\frac{|w_z(e^{i\tau})|^2+|w_{\bar z}(e^{i\tau})|^2}
{(|w_z(e^{i\tau})|-|w_{\bar z}(e^{i\tau})|)^2}\le
\frac{\|F'\|^2_\infty + \|H(F')\|^2_\infty}{2l(F)^2}.\end{equation}
Hence
\begin{equation}\label{1from}|w_z(e^{i\tau})|^2+|w_{\bar z}(e^{i\tau})|^2 \le S
(|w_z(e^{i\tau})|-|w_{\bar z}(e^{i\tau})|)^2 \ \ \
(a.e.),\end{equation} where
\begin{equation}\label{1S}
S:=\frac{\|F'\|^2_\infty + \|H(F')\|^2_\infty}{2l(F)^2}.
\end{equation}
According to \eqref{1ldef}, $S\ge 1$. Let $$\mu(e^{i\tau}):=
\left|\frac{w_{\bar z}(e^{i\tau})}{w_z(e^{i\tau})}\right|.$$ Since
every $C^{2}$ curve is $C^{1,\alpha}$ curve, Theorem~\ref{ma} shows
that $w=g+\overline k$ is univalent and according to Lewy's theorem
$J_w(z)=|g'(z)|^2-|h'(z)|^2>0$. Thus $a(z)= \overline{w_{\bar
z}}/w_z=h'/g'$ is an analytic function bounded by 1. As
$\mu(e^{i\tau})=|a(e^{i\tau})|$, we have  $\mu(e^{i\tau})\le 1$.
Then \eqref{1from} can be written as $$ 1+\mu^2(e^{i\tau})\le
S(1-\mu(e^{i\tau}))^2,
$$ i.e. if $S=1$, then $\mu(e^{i\tau})=0$ a.e. and if $S>1$, then
\begin{equation}\label{1qineq}
\mu^2(S-1) -2\mu S + S - 1=(S-1)(\mu-\mu_1)(\mu-\mu_2)\ge 0,
\end{equation}
where $$\mu_1 = \frac{S+\sqrt{2S-1}}{S-1}$$ and $$\mu_2  =
\frac{S-1}{S+\sqrt{2S-1}}.$$ If $S>1$, then from \eqref{1qineq} it
follows that $\mu(e^{i\tau})\le \mu_2$ or $\mu(e^{i\tau}) \ge
\mu_1$. But $\mu(e^{i\tau}) \le 1$ and therefore

\begin{equation}\label{1mu}
\mu(e^{i\tau}) \le \frac{S-1}{S+\sqrt{2S-1}} \ \ \ \ (a.e.).
\end{equation}

If $S =1$, then \eqref{1mu} clearly holds. Define $\mu(z)=|a(z)|$.
Since $a$ is a bounded analytic function, by the maximum principle
it follows that
$$\mu (z) \le k:=\mu_2, $$ for $z\in \mathbf U$.
This yields that $$K(z) \le K := \frac{1+k}{1-k}=
\frac{2S-1+\sqrt{2S-1}}{\sqrt{2S-1}+1}=\sqrt{2S-1} ,$$ i.e.

$$K(z) \le \frac{\sqrt{\|F'\|^2_\infty +
\|H(F')\|^2_\infty-l(F)^2}}{l(F)}$$ which means that $w$ is $K
=\frac{\sqrt{\|F'\|^2_\infty +
\|H(F')\|^2_\infty-l(F)^2}}{l(F)}-$quasiconfomal. The result is
asymptotically sharp because $K=1$ for $w$ being the identity. This
finishes the proof of Theorem~\ref{KK1}.\end{proof}

\subsection*{A conjecture} Let $F: \mathbf T\to \gamma\subset \mathbf C$ be a homeomorphism
of bounded variation, where $\gamma$ is Dini smooth. Let $D$ be the
bounded domain such that $\partial D =\gamma$. The mapping $w=P[F]$
is a diffeomorphism of $\mathbf U$ onto $D$ if and only if
\begin{equation}\label{use15}
\mathrm{ess\,inf}\{J_w(e^{it}):t\in[0,2\pi]\} \ge 0.\end{equation}

\subsection*{Acknowledgment} I am thankful to the referee for providing very constructive comments and
help in improving the contents of this paper.


\begin{thebibliography}{1}
\bibitem{Ahl} L. Ahlfors: \textit{Lectures on Quasiconformal mappings,}
Van Nostrand Mathematical Studies, D. Van Nostrand 1966.

\bibitem{ale}

G. Alessandrini and V. Nesi:  {\it Invertible harmonic mappings,
beyond Kneser.} Ann. Scuola Norm. Sup. Pisa, Cl. Sci. {\bf 5} VIII
(2009), 451-468.

\bibitem{an}
G. Alessandrini and V. Nesi: {\it Univalent $\sigma$-harmonic
mappings}. Arch. Ration. Mech. Anal. {\bf 158} (2001), no. 2,
155--171.

\bibitem{ABR}
S. Axler, P. Bourdon and W. Ramey: {\it Harmonic function theory},
Springer Verlag New York 1992.


\bibitem{Pd}
P. Duren: {\it Theory of $H\sp p$ spaces .} New York and London:
Academic Press XII, 258 p. (1970).

\bibitem{dure}
P. Duren: {\it Harmonic mappings in the plane.} Cambridge University
Press, 2004.

\bibitem{dur} P. Duren and W. Hengartner:
 {\it Harmonic mappings of multiply connected domains.} Pacific J.
Math. {\bf 180}:2 (1997), 201 -- 220.


\bibitem{G}
G. L. Goluzin: {\it Geometric function theory.} Nauka Moskva 1966.

\bibitem{hsc}  W. Hengartner and G. Schober: {\it Univalent harmonic
mappings.} Trans. Amer. Math. Soc. {\bf 299} (1987), 1--31.

\bibitem{hs}
 W. Hengartner and G. Schober:  {\it Harmonic mappings with given
dilatation.}  J. London Math. Soc. (2)  {\bf 33}  (1986),  no. 3,
473--483.
\bibitem{jj}
J. Jost: {\it Univalency of harmonic mappings between surfaces.} J.
Reine Angew. Math.  {\bf 324}  (1981), 141--153.

\bibitem{trans} D. Kalaj and M. Pavlovi\'c: {\it On quasiconformal self-mappings of the unit disk satisfying the Poisson differential
equation.} Trans. Amer. Math. Soc. 363 (2011) 4043--4061.
\bibitem{KP}D. Kalaj and M. Pavlovi\'c:
{\it Boundary correspondence under harmonic quasiconformal
homeomorfisms of a half-plane.} Ann. Acad. Sci. Fenn., Math. {\bf
30}:1 (2005), 159-165.
\bibitem{km} D. Kalaj and M. Mateljevi\'c:
{\it Inner estimate and quasiconformal harmonic maps between smooth
domains.} Journal d'Analise Math. {\bf 100} (2006), 117--132.
\bibitem{mathz} D. Kalaj: {\it Quasiconformal harmonic mapping
between Jordan domains.}  Math. Z. {\bf 260}:2 (2008), 237--252.
\bibitem{kalann}
\bysame: {\it Harmonic quasiconformal mappings and Lipschitz
spaces.} Ann. Acad. Sci. Fenn., Math. {\bf 34}:2 (2009), 475--485.
\bibitem{kalajjmaa}
\bysame: {\it On quasiregular mappings between smooth Jordan
domains}, Journal of Mathematical Analysis and Applications, {\bf
362}:1 (2010), 58-63.
\bibitem{kalajnac}
\bysame:{\it On boundary correspondence of q.c. harmonic mappings
between smooth Jordan domains.} To appear in Math. Nachr. DOI
10.1002/mana.200910053 (arXiv:0910.4950v1).
\bibitem{kalajpisa}
\bysame:{\it Harmonic mappings and distance function.}  Ann. Scuola
Norm. Sup. Pisa Cl. Sci. (5) Vol. X (2011),
669--681.(arXiv:1011.3012).
\bibitem{MMM}
M. Kne\v zevic and M. Mateljevi\'c: {\it On the quasi-isometries of
harmonic quasiconformal mappings.} Journal of Mathematical Analysis
and Applications, {\bf 334}: 1 (2007),  404--413.


\bibitem{knes} H. Kneser: {\it L\"osung der Aufgabe 41}, Jahresber. Deutsch. Math.-Verein. {\bf  35} (1926) 123--124.
\bibitem{Ko} O. Kellogg: {\it Harmonic functions and Green's
integral.} Trans. Amer. Math. Soc. {\bf 13} (1912), 109--132.

\bibitem{Rl}
R. S. Laugesen: {\it Planar harmonic maps with inner and Blaschke
dilatations}.  J. Lond. Math. Soc., II. Ser. {\bf 56}, No.1, 37--48
(1997).


\bibitem{l}
H. Lewy: {\it On the non-vanishing of the Jacobian in certain in
one-to-one mappings.} Bull. Amer. Math. Soc. {\bf 42} (1936),
689--692.
\bibitem{Lw}
F. D. Lesley and S. E. Warschawski: {\it Boundary behavior of the
Riemann mapping function of asymptotically conformal curves.} Math.
Z. {\bf 179} (1982), 299-323.
\bibitem{kojic}
V. Manojlovi\'c: {\it Bi-lipshicity of quasiconformal harmonic
mappings in the plane.} Filomat {\bf 23}:1 (2009), 85--89.

\bibitem{om}
O. Martio: {\it On harmonic quasiconformal mappings.} Ann. Acad.
Sci. Fenn., Ser. A I {\bf 425} (1968), 3-10.

\bibitem{mv2}  M. Mateljevi\'c and M.~Vuorinen:
{\it On harmonic quasiconformal quasi-isometries.} Journal of
Inequalities and Applications, {\bf 2010}, Article ID 178732, 19
pages, DOI:10.1155/2010/1787;

\bibitem{PW} C. Pommerenke: {\it Univalent functions.}
Vanderhoeck \& Riprecht, 1975.
\bibitem{POW}
C. Pommerenke and S.E. Warschawski: {\it  On the quantitative
boundary behavior of conformal maps.} Comment. Math. Helv. {\bf 57}
(1982), 107--129.
\bibitem{pk} D. Partyka and K. Sakan:
{\it On bi-Lipschitz type inequalities for quasiconformal harmonic
mappings.} Ann. Acad. Sci. Fenn. Math. {\bf 32} (2007),  579--594.
\bibitem{mp} M. Pavlovi\' c:
{\it Boundary correspondence under harmonic quasiconformal
homeomorfisms of the unit disc.} Ann. Acad. Sci. Fenn., {\bf 27}
(2002), 365--372.
\bibitem{lib}
M. Pavlovi\'c: {\it Introduction to function spaces on the disk.}
20. Matemati\v cki Institut SANU, Belgrade, 2004. vi+184 pp.

\bibitem{rmr}
R. L. Range: {\it On a Lipschitz Estimate for Conformal Maps in the
Plane.} Proceedings of the American Mathematical Society,  {\bf 58}:
1, (1976), 375--376.

\bibitem{sf}
F. Schulz: {\it Univalent solutions of elliptic systems of
Heinz-Lewy type.} Ann. Inst. H. Poincar\'e Anal. Non Lin\'eaire {\bf
6} (1989), no. 5, 347--361.

\bibitem{sy}
R. Schoen and S-T. Yau:  {\it On univalent harmonic maps between
surfaces.} Invent. Math. {\bf 44} (1978), no. 3, 265--278.



\bibitem{stein}
W. Rudin: {\it Real and complex analysis.} Third edition.
McGraw-Hill Book Co., New York, 1987. xiv+416 pp.
\bibitem{wan}
T. Wan: {\it Constant mean curvature surface, harmonic maps, and
universal Teichmüller space.} J. Differential Geom. {\bf 35}: 3
(1992), 643--657.

\bibitem{SW}
S. E. Warschawski: {\it On Differentiability at the Boundary in
Conformal Mapping.} Proceedings of the American Mathematical
Society, {\bf 12}:4 (1961), 614-620.

\bibitem{young}
A. Weil: {\it L' Int\'egration dans les groupes topologiques et ses
applications,} Actualit\'es Sci et. Ind. 869, 1145, Hermann et Cie,
1941 and 1951.

\bibitem{ZY}
A. Zygmund:  {\it Trigonometric Series I.} Cambrige University
Press, 1958.


\end{thebibliography}
\end{document}